\documentclass[11pt,a4paper]{amsart}
\usepackage[T1]{fontenc}
\usepackage{lmodern}
\usepackage[margin=2.5cm]{geometry}
\usepackage{amsmath,amssymb,amsthm,mathtools}
\usepackage{enumitem,booktabs}
\usepackage{hyperref}

\theoremstyle{plain}
\newtheorem{theorem}{\bf Theorem}[section]

\newtheorem{lemma}[theorem]{\bf Lemma}

\newtheorem{conjecture}[theorem]{\bf Conjecture}

\theoremstyle{definition}

\newcommand{\dd}{\mathsf d}

\DeclareMathOperator{\ord}{ord}

\title{On a classical zero-sum invariant II: \\ Disproof of a long-standing conjecture}

\author{Alfred Geroldinger \and Guoqing Wang \and Wenkai Yang}

\address{Department of Mathematics and Scientific Computing\\ University of Graz, NAWI Graz\\ Heinrichstra{\ss}e 36\\ 8010 Graz, Austria}
\email{alfred.geroldinger@uni-graz.at}
\urladdr{https://geroldinger.github.io/}

\address{School of Mathematical Sciences \\ Tiangong University \\ Tianjin 300387, PR China}
\email{gqwang1979@aliyun.com}

\address{Center for Combinatorics \\ Nankai University, Tianjin, P.R. China}
\email{ywk1@tju.edu.cn}

\subjclass[2020]{11B30; 11B50, 11B75, 11P70}

\keywords{zero-sum sequences, zero-sum free sequences, subsequence sums}

\begin{document}
\maketitle

\begin{abstract}
For a nontrivial finite abelian group $G$, let $\nu(G)$ be the smallest  integer
$\ell$ such that every zero-sum free sequence $T$ over $G$ of length at least $\ell$ has the following property: all nonzero elements of $G$ that do not occur as a subsequence sum of $T$ lie in a proper coset of some subgroup of $G$. It is easy to check that $\mathsf d (G)-1 \le \nu (G) \le \mathsf d (G)$, where $\mathsf d (G)$ is the small Davenport constant of $G$. A conjecture by Gao from the year 2000 stated that equality should always hold at the lower bound. This conjecture has since been confirmed for many families of groups (including all p-groups and groups of rank at most two). In the current note, we disprove the conjecture.
\end{abstract}

\section{Introduction}

Let $G$ be a nontrivial finite abelian group, say $G = C_{n_1} \oplus \ldots \oplus C_{n_r}$, where $r=\mathsf r (G) \in \mathbb N$ is the rank of $G$ and $1 < n_1 \mid \ldots \mid n_r$. We set
$G^\bullet=G\setminus\{0\}$, $\mathsf d^* (G) = \sum_{i=1}^r (n_i-1)$, and  $\mathsf D^* (G) = 1 + \mathsf d^* (G)$. The (small) Davenport constant $\mathsf d (G)$ is the maximal length of a zero-sum free sequence over $G$,  the (large) Davenport constant $\mathsf D (G)$ is the maximal length of a minimal zero-sum sequence over $G$, and we have $\mathsf D (G) = \mathsf d (G)+1 \ge \mathsf D^* (G)$. It is well-known that $\mathsf D (G) = \mathsf D^* (G)$ if $r \le 2$, if $G$ is a $p$-group, and for many others. But, in general inequality may hold. If $r \le 2$, then the structure of minimal zero-sum sequences of maximal length is known. But, if $r \ge 3$, even for $p$-groups only little is known about the structure of such sequences.

The invariant $\nu (G)$ (introduced by Emde Boas in the 1960s, \cite{Em69a}) offers information on the structure of extremal sequences by considering the set of their subsequence sums. Let $p$ be a prime divisor of $|G|$. Let $\nu (G)$ resp. $\nu_p (G)$ be the smallest integer such that, for any zero-sum free sequence $T$ over $G$, $|T| \ge \nu (G)$ resp. $|T| \ge \nu_p (G)$ implies that
\begin{equation} \label{def-nu(G)}
G^{\bullet} \setminus \Sigma (T) \subseteq \alpha+H \ \mbox {for some  subgroup $H  \subsetneq G$ and some $\alpha\in G\setminus H$}
\end{equation}
resp.
\begin{equation} \label{def-nu_p(G)}
G^{\bullet} \setminus \Sigma (T) \subseteq \alpha + H \ \text{for some subgroup} \ H \subseteq G \ \text{with} \ [G \colon H] = p \ \text{and some} \ \alpha \in G \setminus H \,.
\end{equation}
If $T \in \mathcal F (G)$ is zero-sum free with $|T|=\mathsf d(G)$, then, by a straightforward argument, we get $\Sigma (T) = G^{\bullet}$, whence, by definition, $\nu (G) \le \nu_p (G) \le \mathsf d (G)$. A further simple argument shows that
\begin{equation} \label{basic-inequ-2}
\mathsf d (G) -1 \le \nu (G) \le \nu_p (G) \le \mathsf d (G) \,.
\end{equation}

In 2000 Weidong Gao proposed the following conjecture (see \cite[Section~3]{Ga00b} or \cite[Conjecture~4.10]{Ga-Ge06b}).
\begin{conjecture} \label{conj:gao}
Every nontrivial finite abelian group $G$ satisfies $\nu(G)=\dd(G)-1$.
\end{conjecture}

This conjecture initiated a great deal of  further research. It got confirmed for $p$-groups (\cite[Theorem 5.5.9]{Ge-HK06a}), for  groups of rank at most two (\cite[Theorem 3.4.11]{Ge-Gr-Zh26a}), and others. Only recently it was confirmed for a series of groups $G$ satisfying $\mathsf d (G) > \mathsf d^* (G)$ (\cite{Ge-Ya27a}).

In this note, we disprove the conjecture by providing a series of groups with $\nu (G) = \mathsf d (G)$.

\begin{theorem}\label{thm:main1}
Let $G=C_2^3\oplus C_{2n}$ for some odd $n\geq 3$. Then $$\nu(G)=\nu_2 (G)=\dd(G)=\mathsf d^* (G).$$
\end{theorem}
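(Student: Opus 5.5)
The plan is to establish the chain of inequalities
\[
\mathsf d^*(G)\le \mathsf d(G)\le \mathsf d^*(G),\qquad \mathsf d(G)-1<\nu(G)\le \nu_2(G)\le \mathsf d(G),
\]
which gives the four equalities in Theorem~\ref{thm:main1}.

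\textbf{Step 1: the Davenport constant.} First I would verify $\mathsf d(G)=\mathsf d^*(G)=2n+2$ for $G=C_2^3\oplus C_{2n}$. For groups of the form $C_2^{3}\oplus C_{2n}$ this equality is classical; it goes back to van Emde Boas' work on groups $C_2^{r}\oplus C_{2n}$ with small $r$. I would invoke it rather than reprove it. The lower bound is the standard basis example $e_1e_2e_3e^{2n-1}$. By \eqref{basic-inequ-2} we then have $\nu(G)\le \nu_2(G)\le \mathsf d(G)$. So the whole statement reduces to showing $\nu(G)>\mathsf d(G)-1=2n+1$.

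\textbf{Step 2: a reformulation of the failure of \eqref{def-nu(G)}.} To show $\nu(G)>2n+1$, it suffices to exhibit a zero-sum free sequence $T$ of length $2n+1$ whose set $M=G^\bullet\setminus\Sigma(T)$ of missed nonzero elements lies in no coset $\alpha+H$ with $H\subsetneq G$ and $\alpha\notin H$. I would use two easy sufficient criteria.
\begin{itemize}[leftmargin=*]
\item If $x,y,x+y\in M$, then $x-y\in H$ and $(x+y)-x=y\in H$. Hence $\alpha+H=H$, which is a contradiction.
\item If $x,2x\in M$ with $2x\ne 0$, then $x\in H$, again a contradiction.
\end{itemize}
So the target is a zero-sum free $T$ with $|T|=2n+1$ such that $\Sigma(T)$ misses two nonzero elements together with their sum, or misses an element together with its double. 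The second criterion is where oddness of $n\ge 3$ should enter: writing $G\cong C_2^4\oplus C_n$, elements of odd order $n$ have nonzero doubles.

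\textbf{Step 3: the construction (main obstacle).} I would work in $C_2^4\oplus C_n$ with basis $f_1,\dots,f_4$ of $C_2^4$ and a generator $h$ of $C_n$. The candidate sequences have the shape
\[
T=(f_1+a_1h)(f_2+a_2h)(f_3+a_3h)(f_4+a_4h)\,(f_1+f_2+f_3+f_4+bh)\cdot S,
\]
where $S$ is a sequence of length $2n-4$ built from elements of the form $h$, $f_i+h$, or $f_i+f_j+ch$. The coefficients are to be chosen so that three things hold.
\begin{enumerate}
\item The $C_2^4$-projection has all its zero-sum subsequences controlled. They come from the single relation $f_1+f_2+f_3+f_4+(f_1+\dots+f_4)=0$ together with pairs in $S$.
\item For each such $C_2^4$-zero-sum subsequence, the $C_n$-parts never sum to $0$. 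This gives zero-sum freeness of $T$.
\item The $C_n$-coordinates of the resulting sums avoid a prescribed small set, for instance $\{x,2x\}$ with $x=f_1+h$ or a similar choice.
\end{enumerate}
The subsequence sums then split according to their $C_2^4$-projection. For each of the $16$ projections one computes the set of attainable $C_n$-coordinates as a sumset of arithmetic progressions, and checks that $M$ contains the required pair or triple.

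This computation is the hard step. I would first find explicit examples for $n=3$ and $n=5$ by hand or computer. I would then look for a uniform pattern in the coefficients $a_i,b,c$ as functions of $n$, and prove the required properties of $\Sigma(T)$ by a case analysis over the $C_2^4$-projections.

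\textbf{Step 4: conclusion.} Once such a $T$ exists, $\nu(G)\ge 2n+2=\mathsf d(G)$. Combined with Step~1 and \eqref{basic-inequ-2}, this yields $\nu(G)=\nu_2(G)=\mathsf d(G)=\mathsf d^*(G)$.
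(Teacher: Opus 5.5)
Your Step 1 is fine: the paper also just quotes $\mathsf d(G)=\mathsf d^*(G)=2n+2$ and then uses \eqref{basic-inequ-2}. The real gap is Step 3. The content of the theorem \emph{is} an explicit sequence, and you never produce one. Worse, the target you set for it in Step 2 cannot be reached.

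Let $T$ be any zero-sum free sequence with $|T|=\mathsf d(G)-1$, and put $M=G^\bullet\setminus\Sigma(T)$. Take $x\in M$. The sequence $T\boldsymbol{\cdot}(-x)$ is zero-sum free of length $\mathsf d(G)$, so $\Sigma\bigl(T\boldsymbol{\cdot}(-x)\bigr)=G^\bullet$. Hence every $y\in M$ satisfies $y=-x$ or $y\in -x+\Sigma(T)$. That is, $x+y=0$ or $x+y\in\Sigma(T)$, and in either case $x+y\notin M$. So $M\cap(M+M)=\emptyset$ for every such $T$.

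Your first criterion ($x,y,x+y\in M$) and your second ($x,2x\in M$, the case $y=x$) both assert $M\cap(M+M)\neq\emptyset$. So no zero-sum free sequence of length $2n+1$ can satisfy either one, and a search at $n=3,5$ aimed at them would find nothing. For the same reason, your remark that the oddness of $n$ enters through nonzero doubles is off target.

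What is actually needed is $M\cap\langle M-M\rangle\neq\emptyset$. Indeed, if $M\subseteq\alpha+H$ with $\alpha\notin H$, then $\langle M-M\rangle\subseteq H$ while $M\cap H=\emptyset$. Since $M\cap(M-M)$ is always empty, you must combine several differences.

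The paper does exactly this with
$T_n=e^{2n-3}(e_1+ae)(e_2+ae)(e_3+ae)(e_1+e_2+e_3+ae)$, where $a=(n+1)/2$.
\begin{itemize}
\item The last four terms sum to $4ae=2e$, and adding at most $2n-3$ copies of $e$ never gives $0$, so $T_n$ is zero-sum free.
\item Let $A=\{e_1+e_2,e_1+e_3,e_2+e_3\}$ and $x\in A$. Only pairs of the last four terms land in $x+\langle e\rangle$. This gives coefficients $n+1,\dots,3n-2$, which modulo $2n$ miss exactly $n-1$ and $n$. Hence $x+(n-1)e,\;x+ne\in M$.
\item Taking differences yields $e\in H$ and $A-A=A\cup\{0\}\subseteq H$. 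Therefore $x+(n-1)e\in H\cap M$, a contradiction.
\end{itemize}
Here $x+(n-1)e$ lies in $\langle M-M\rangle$ but is not a single difference of elements of $M$. This is precisely the kind of element your criteria cannot detect.
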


Note that, if $G = C_2^r \oplus C_{2n}$ with $n>70$ odd, then $\nu (G) = \mathsf d (G)-1$ for $r \in \{0,1,2,4\}$. For the growth of $\mathsf d (G) - \mathsf d^* (G)$ we refer to \cite[Theorem 1]{Ma92} and more recently to \cite{Li20a}.

\begin{theorem}\label{thm:main2}
Let $G=C_2^r \oplus C_6$ with $r\in \{5,6\}$. Then $$\nu(G)=\nu_2(G)=\dd(G)=\mathsf d^* (G)+1.$$
\end{theorem}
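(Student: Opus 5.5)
The proof has two parts: the upper bound, which is the known value of $\dd(G)$, and a lower bound from an explicit sequence. By \eqref{basic-inequ-2} we have $\dd(G)-1\le \nu(G)\le \nu_2(G)\le \dd(G)$. So it suffices to know $\dd(G)=\mathsf d^*(G)+1$ and to exhibit one zero-sum free sequence $T$ over $G$ with $|T|=\dd(G)-1$ that violates \eqref{def-nu(G)}. This gives $\nu(G)>\dd(G)-1$, hence $\nu(G)=\nu_2(G)=\dd(G)$.

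For $\dd(G)=\mathsf d^*(G)+1$, i.e.\ $\dd(C_2^r\oplus C_6)=r+6$ for $r\in\{5,6\}$, I would quote the literature rather than reprove it. The inequality $\mathsf D(C_2^4\oplus C_6)=\mathsf D^*+1$ is classical, and the values for $r=5,6$ are available from subsequent (partly computational) work. If only the lower bound were needed, a zero-sum free sequence of length $r+6$ would suffice. However, the upper bound $\dd\le \mathsf d^*+1$ is a genuine input that I would cite.

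\textbf{Reformulating the violation.} Let $T$ be zero-sum free with $M=G^\bullet\setminus\Sigma(T)\neq\emptyset$, and put $K=\langle M-M\rangle$, so that $M\subseteq m+K$ for any $m\in M$.
\begin{itemize}[label={}]
\item If $m\notin K$, then $H=K$ is a proper subgroup with $M\subseteq m+H$ and $m\notin H$, so \eqref{def-nu(G)} holds.
\item Conversely, if $M\subseteq \alpha+H$ with $\alpha\notin H$, then $K\subseteq H$ and $m\in\alpha+H$. Hence $m\notin K$.
\end{itemize}
Thus $T$ violates \eqref{def-nu(G)} if and only if $M\neq\emptyset$ and $M\subseteq\langle M-M\rangle$. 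A convenient sufficient condition is that $M$ contains three elements $a,b,a+b$ (for then $a\in K$), or an element $x$ of order $3$ or $6$ together with $2x$.

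\textbf{Reduction to $r=5$.} Suppose $T$ over $C_2^5\oplus C_6$ works, and let $e$ generate a new $C_2$ summand. Then $Te$ is zero-sum free of length $r+6=\dd(C_2^6\oplus C_6)-1$. Its missing set is $M'=M\cup(e+M)$. We have $\langle M'-M'\rangle\supseteq K\cup\{e\}$, which contains $M\cup(e+M)$. So $Te$ also violates \eqref{def-nu(G)}.

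\textbf{The case $r=5$.} It remains to find $T$ of length $10$ over $C_2^5\oplus C_6$ with the property above. I would start from a known extremal zero-sum free sequence $S$ of length $11$ (built from the $C_2^4\oplus C_6$ example plus a basis element). I would then try deleting one element, or modifying a few elements, and compute $\Sigma(T)$ directly in the group of order $192$, checking that the missing set contains a configuration $a,b,a+b$. Finding this explicit $T$ is the main obstacle. The verification itself is finite and routine, and I would present it as a direct listing of subsequence sums.
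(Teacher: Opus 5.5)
\textbf{Verdict: genuine gap.} The explicit counterexample for $r=5$ is missing, and it is the whole content of the theorem.

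\textbf{What is correct.} Your framework is sound:
\begin{itemize}
\item the reduction via \eqref{basic-inequ-2};
\item the criterion that $T$ violates \eqref{def-nu(G)} exactly when $\emptyset\ne M\subseteq\langle M-M\rangle$;
\item the passage from $r=5$ to $r=6$ by appending a new basis element, which is the paper's $S_g$ with $g=0$.
\end{itemize}

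\textbf{The gap.} The theorem is a disproof by explicit construction. Its only nontrivial step is to exhibit a zero-sum free $T$ of length $10$ over $C_2^5\oplus C_6$ with $M\subseteq\langle M-M\rangle$. You leave this as ``the main obstacle''. Without it, nothing beyond $\dd(G)-1\le\nu(G)\le\nu_2(G)\le\dd(G)$ has been shown. ``Modifying a few elements'' is not a construction.

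Your deletion heuristic also cannot succeed on its own. Take $S=S_4e_5$ with $S_4$ of maximal length over $C_2^4\oplus C_6$, and apply your own lifting computation.
\begin{itemize}
\item Deleting $e_5$ leaves the missing set $e_5+(C_2^4\oplus C_6)$. This is a coset as in \eqref{def-nu(G)}, so no violation.
\item Deleting a term $g\mid S_4$ gives a violation over $C_2^5\oplus C_6$ if and only if $S_4g^{-1}$ already violates \eqref{def-nu(G)} over $C_2^4\oplus C_6$.
\end{itemize}
So this route works only if $\nu(C_2^4\oplus C_6)=\dd(C_2^4\oplus C_6)$, which nobody has established. The paper's examples start at $r=5$, and for $C_2^4\oplus C_{2n}$ with odd $n>70$ one even has $\nu=\dd-1$.

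\textbf{How to close it.} You need a sequence that uses all five order-$2$ directions symmetrically. The paper takes $S=\prod_{i=1}^5(e_i+e)(e_i+4e)$ with $\ord(e)=6$, and sets $g_0=e_1+\dots+e_5$.
\begin{itemize}
\item A subsequence sum in $\langle e\rangle$ needs both or neither term of each pair. Each pair sums to $-e$, so $S$ is zero-sum free.
\item A sum in $g_0+\langle e\rangle$ uses exactly one term per pair. It equals $g_0+(20-3j)e\in g_0+\{2e,5e\}$.
\item A sum in $g_0-e_i+\langle e\rangle$ lies in $g_0-e_i+\{0,e,3e,4e\}$.
\end{itemize}
Hence $g_0$, $g_0+e$ and $g_0-e_i+2e$ for $i\in[1,5]$ are missing. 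Their differences give $e$ and $e_i-2e$, so $\langle M-M\rangle=G$ and your criterion applies. Your lifting step then settles $r=6$.
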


\section{Notation and terminology}\label{sec:criterion}

For real numbers $a,b\in \mathbb{R}$, let $[a,b]=\{x\in \mathbb{Z}: a\leq x\leq b\}$ denote the discrete interval between $a$ and $b$. Let $G$ be an additively written finite abelian group.
The identity element of $G$ is denoted by $0$. For a subset
$A\subseteq G$, the subgroup generated by $A$ is denoted by
$\langle A\rangle$. We write $C_n$ for a cyclic group of order $n$.

We use the standard language of sequences over groups;  see \cite{Ga-Ge06b}
 and \cite[Chapter 5]{Ge-HK06a}.  A sequence over
$G$ is an element of the free abelian monoid $\mathcal F(G)$ and will be written in the form
\[
S= \prod\limits_{g\in G} g^{\mathsf v_g(S)} = g_1\boldsymbol{\cdot}\ldots\boldsymbol{\cdot}g_\ell \,,
\]
where $\mathsf v_g(S)\in\mathbb N_0$ is the multiplicity of $g$ in $S$.
The {\sl length} of a sequence $S$ is
$|S|=\sum_{g\in G}\mathsf v_g(S)$.  A sequence $T \in \mathcal F(G)$ is called a {\it subsequence } of $S$ and is denoted by $T \mid S$ if  $\mathsf v_g(T) \le \mathsf v_g(S)$ for all $g\in G$.
We write
$$\sigma(S)=\sum_{i=1}^{\ell}g_i
 \quad\text{and}\quad
 \Sigma(S)=\left\{\sum_{i\in I}g_i:
                    \emptyset\ne I\subseteq[1,\ell]\right\}.$$
The sequence $S$ is zero-sum free if $0\notin\Sigma(S)$.
It is a minimal zero-sum sequence if it is nontrivial,
$\sigma(S)=0$, and every proper nontrivial subsequence
has nonzero sum.

\section{Proof of Theorems \ref{thm:main1} and \ref{thm:main2}}\label{sec:constructions}

We need the following lemmas.

\begin{lemma} \label{2.1}
Let $G = C_2^r \oplus C_{2n}$ with $n\ge3$ odd and $r\ge1$. 
Then $\mathsf d(G)=\mathsf d^*(G)$ if and only if $r\le3$.
\end{lemma}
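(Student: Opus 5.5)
The plan is to handle the two directions separately. For the easy direction ($r\le 3 \Rightarrow \mathsf d(G)=\mathsf d^*(G)$) I will rely on the literature rather than reprove it. For the converse I will reduce to the single case $r=4$ and give an explicit construction there. Throughout I use $\mathsf d^*(G)=r+2n-1$ and $G\cong C_2^{r+1}\oplus C_n$, since $n$ is odd.

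\emph{Case $r\le 3$.} Here I invoke the known results that $\mathsf D(C_2^{r}\oplus C_{2n})=\mathsf D^*(C_2^{r}\oplus C_{2n})$ for $r\le 3$. For $r=1$ the group has rank two, so this is classical (Olson, van Emde Boas). For $r=2$ and $r=3$ the equality is in the literature on groups of the form $C_2^r\oplus C_{2n}$ (van Emde Boas, Baayen, and Schmid's results on groups $C_2^3\oplus C_{2n}$).

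Should a self-contained argument be wanted, I would run the standard inductive method with $K=2G\cong C_n$ and $G/K\cong C_2^{r+1}$. Let $S$ be a sequence of length $r+2n$. Because $n$ is odd, $\mathsf s$- and $\eta$-type bounds for the elementary $2$-group let one repeatedly extract short subsequences whose sums lie in $K$. Then Olson's theorem, $\mathsf D(C_n)=n$, applied to the sequence of these sums produces a zero-sum subsequence. The delicate point is that $\eta(C_2^{r+1})=2^{r+1}$ is only small enough relative to the length when $r\le 3$. This is exactly where the argument breaks at $r=4$, and it is also the part of this case I expect to be hardest; hence the preference for citing.

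\emph{Case $r\ge 4$.} First, $\mathsf d(G)-\mathsf d^*(G)$ is nondecreasing in $r$. Indeed, if $T$ is zero-sum free over $C_2^r\oplus C_{2n}$, then $T\boldsymbol{\cdot} e$ is zero-sum free over $C_2^{r+1}\oplus C_{2n}$, where $e$ generates the new $C_2$ summand. This gives $\mathsf d(C_2^{r+1}\oplus C_{2n})\ge \mathsf d(C_2^r\oplus C_{2n})+1$, while $\mathsf d^*$ also increases by exactly $1$. Hence it suffices to exhibit, for $r=4$ and every odd $n\ge3$, a zero-sum free sequence of length $2n+4=\mathsf d^*(G)+1$.

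For this I would use the Geroldinger--Schneider type construction. Write $G=C_2^5\oplus C_n$ with basis $e_1,\dots,e_5$ of $C_2^5$ and a generator $g$ of $C_n$. Take a block of elements of the form $e_i+g$ (or $e_i$ plus suitable multiples of $g$) together with a long run of a single element $h=e_0+g$, where $e_0=e_1+\dots+e_5$. The coefficients are chosen so that every subsequence whose $C_2^5$-component vanishes has $C_n$-component a nonzero integer multiple of $g$ of absolute value less than $n$. Verifying zero-sum freeness is then a finite check: first enumerate the subsets of the $C_2$-part summing to $0$ in $C_2^5$, which is governed by a small linear code, and then bound the resulting $C_n$-coefficients.

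The main obstacle is getting this explicit sequence right, so that it has length $2n+4$ and works uniformly for all odd $n\ge3$. I would try first to take the known example from Geroldinger--Schneider (1992), which shows $\mathsf D(C_2^4\oplus C_{2k})>\mathsf D^*$ for odd $k$, and check it directly.
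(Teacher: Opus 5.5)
The paper gives no argument for this lemma; it quotes it from the textbook reference (Theorem 3.9.7 there). Citing the literature for $r\le 3$ is therefore on the same footing as the paper. Your monotonicity step is correct and a real simplification of the converse: appending a generator of a new $C_2$ summand keeps a zero-sum free sequence zero-sum free, while $\mathsf d^*$ grows by exactly $1$. The gap is the one thing you set out to do yourself. A zero-sum free sequence of length $2n+4$ over $C_2^4\oplus C_{2n}$ is never written down, and the template you sketch cannot work. If the block consists of the five terms $e_i+c_ig$, $i\in[1,5]$, the length forces the run $h^{2n-1}$. Since $\ord(h)=2n$, we have $\Sigma(h^{2n-1})=\langle h\rangle\setminus\{0\}=\langle e_0,g\rangle\setminus\{0\}$. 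This set contains the negative of the nonzero block sum $e_0+(\sum_i c_i)g$, so a zero-sum subsequence always exists. In fact an element of order $2n$ can occur at most $2n-3$ times in such an example, so at least seven further terms are needed. Your aside on a self-contained proof for $r\le 3$ is also off. With $K=2G$ the inductive method yields only $\mathsf D(G)\le 2(n-1)+2^{r+1}$, which exceeds $\mathsf D^*(G)=2n+r$ for every $r\ge 1$. So the cases $r=2,3$ need finer arguments, and the breakdown at $r=4$ is not an $\eta$-effect.

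A working example extends the sequence used for Theorem~\ref{thm:main1}: keep the run $e^{2n-3}$ and replace the four-term block by seven terms. Take a basis $(e_1,\dots,e_4,e)$ with $\ord(e)=2n$, set $a=(n+1)/2$, and put $f_i=e_i$ for $i\in[1,4]$, $f_5=e_1+e_2+e_4$, $f_6=e_1+e_3+e_4$, $f_7=e_2+e_3+e_4$. These are the columns of a generator matrix of the $[7,4]$ Hamming code. Let $T=e^{2n-3}\prod_{i=1}^{7}(f_i+ae)$. The nonempty sets $I\subseteq[1,7]$ with $\sum_{i\in I}f_i=0$ are the supports of the nonzero words of the $[7,3]$ simplex code, so all of them have $|I|=4$. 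Hence every nonempty subsequence with sum in $\langle e\rangle$ has sum either $je$ with $j\in[1,2n-3]$, or $(4a+j)e=(j+2)e$ with $j\in[0,2n-3]$; neither is $0$. So $T$ is zero-sum free of length $2n+4=\mathsf d^*(G)+1$, and your monotonicity argument then gives $\mathsf d(G)>\mathsf d^*(G)$ for all $r\ge 4$. Alternatively, state the case $r=4$ explicitly as a citation of Geroldinger--Schneider. Either way your proof becomes complete at the same level of rigor as the paper's.
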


\begin{proof}
See \cite[Theorem 3.9.7]{Ge-Gr-Zh26a}.
\end{proof}

\begin{lemma}\label{lem:davenport}
We have  $\dd(C_2^5\oplus C_6)=11$ and $\dd(C_2^6\oplus C_6)=12$.
\end{lemma}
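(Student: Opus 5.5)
We need $\mathsf d(C_2^5\oplus C_6)=11$ and $\mathsf d(C_2^6\oplus C_6)=12$. Here $\mathsf d^*(C_2^r\oplus C_6)=r+5$, so the claim is $\mathsf d(G)=\mathsf d^*(G)+1$ for $r\in\{5,6\}$. We first try to cite the literature: these values appear in the tables of known Davenport constants for groups $C_2^r\oplus C_{2n}$ (see \cite[Section 3.9]{Ge-Gr-Zh26a} and the computations referenced there). If the citation is not available, we prove the two inequalities separately.

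\emph{Lower bound.} It suffices to exhibit a zero-sum free sequence of length $11$ over $C_2^5\oplus C_6$. Appending a new basis element of an extra $C_2$ factor then gives a zero-sum free sequence of length $12$ over $C_2^6\oplus C_6$, since $\mathsf d(H\oplus C_2)\ge \mathsf d(H)+1$.

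To build the length-$11$ sequence, write $C_6=C_2\oplus C_3$, so $G\cong C_2^6\oplus C_3$. The sequence will have $2$-parts $v_i\in\mathbb F_2^6$ and $3$-parts $c_i\in\mathbb F_3$. A subsequence $I$ is a zero-sum exactly when $\sum_{i\in I}v_i=0$ and $\sum_{i\in I}c_i=0$ in $\mathbb F_3$. The standard Baayen-style construction takes
\[
e_1,\dots,e_5 \ \text{(with $3$-part $0$)}
\]
together with six elements whose $2$-parts span the relations in a controlled way and whose $3$-parts equal $1$. We then check that every index set with $\sum v_i=0$ contains a number of those six elements that is not divisible by $3$.

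Concretely, we want six vectors $w_1,\dots,w_6$ (with $3$-part $1$) in $\mathbb F_2^6$, modulo the span of $e_1,\dots,e_5$, that is, in a $1$-dimensional quotient. After reducing the $e_i$ away, the condition becomes: for every subset $J\subseteq[1,6]$ whose sum lies in the span of $e_1,\dots,e_5$, we need $3\nmid |J|$ unless $J=\emptyset$. If $e_1,\dots,e_5$ were the only $2$-part basis vectors, this would fail. So instead we take fewer "pure" $e_i$ and encode more structure in the $w_j$. The natural search is over sequences of the form $e_1,\dots,e_k$ together with elements $(u_j,1)$, and we require the binary code $\{J:\sum_{j\in J}u_j\in\langle e_1,\dots,e_k\rangle\}$ to avoid nonempty words of weight divisible by $3$. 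This is a small finite search that I would do by computer, or by hand using a code such as a shortened simplex or Hamming code with all weights in $\{1,2,4,5,\dots\}$ mod $3$.

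\emph{Upper bound.} We need every sequence of length $12$ over $C_2^5\oplus C_6$ (respectively $13$ over $C_2^6\oplus C_6$) to contain a nonempty zero-sum subsequence. The standard tool is the inductive bound
\[
\mathsf d(G)\le (\mathsf d(G/H)+1)\cdot\exp(H)\ \text{-type estimates},
\]
or more precisely: take $H\cong C_3$, so that $G/H\cong C_2^{r+1}$. From a sequence $S$ we repeatedly extract disjoint subsequences with sum in $H$, each of length at most $\mathsf D(C_2^{r+1})=r+2$. If we obtain three such blocks, their $H$-sums yield a zero-sum in $C_3$ and we are done. This naive argument needs $|S|\ge (r+1)+2(r+2)$ and is too weak, so it must be refined. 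The refinement is to show that short blocks exist: sequences over $C_2^{r+1}$ of length well above $r+2$ contain zero-sum subsequences of length at most $2$ or $3$, unless they consist of few distinct elements.

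I expect this upper bound to be the main obstacle. Realistically it is a finite verification, and I would resolve it by an exhaustive computer check, reducing by $\mathrm{Aut}(G)$ and using the fact that a zero-sum free sequence over $C_2^{r+1}\oplus C_3$ is squarefree in its $2$-torsion part. This mirrors how such values are established in the literature, to which I would ultimately defer.
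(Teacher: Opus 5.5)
\textbf{The gap is the upper bound.} Your fallback never proves $\mathsf d(C_2^r\oplus C_6)\le r+6$ for $r\in\{5,6\}$.

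\begin{itemize}
\item The block-extraction argument over $G/H\cong C_2^{r+1}$ is far too weak, as you note yourself.
\item Your proposed refinement is false. Take any $12$ distinct elements of $C_2^6$ with first coordinate $1$. This sequence has no zero-sum subsequence of length at most $3$: a sum of an odd number of them has first coordinate $1$, and there are no repeated terms. So short blocks are not forced, even when there are many distinct terms.
\end{itemize}

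The paper does not prove this upper bound either. It takes it from \cite[p.~2]{Sc11b}, where it is attributed to Ponomarenko. The paper uses \cite[Theorem 3.9.7]{Ge-Gr-Zh26a} (Lemma~\ref{2.1}) only to get $\mathsf d>\mathsf d^*$ for $r\ge 4$, which is the lower bound. So your hedge that ``these values appear in Section~3.9'' does not settle equality. Likewise, ``exhaustive computer check, or defer to the literature'' is not a proof unless you name the source where the upper bound is actually established.

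The lower bound is fine in principle, but you left the construction unexecuted. It follows at once from Lemma~\ref{2.1}, and your own search also succeeds with $k=1$. In $C_2^5\oplus C_6$ with basis $(e_1,\dots,e_5,e)$, take $S=\prod_{i=1}^5(e_i+e)(e_i+4e)$, whose terms all have $C_3$-component $1$, and add the pure element $g_0=e_1+\dots+e_5$.

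\begin{itemize}
\item A nonempty subsequence of $S$ with sum in $\langle e\rangle$ consists of $k\ge 1$ full pairs. Its sum is $-ke\neq 0$, since each pair sums to $5e=-e$.
\item A subsequence with sum in $g_0+\langle e\rangle$ takes one term per pair. Its sum is $g_0+2e$ or $g_0+5e$.
\end{itemize}

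Hence $S$ is zero-sum free and $-g_0=g_0\notin\Sigma(S)$, so $S\,g_0$ is zero-sum free of length $11$. In your coding language, the relevant binary code has nonzero weights $4,5,8$, none divisible by $3$. Appending $e_6$, as you propose, then gives length $12$ over $C_2^6\oplus C_6$.
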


\begin{proof}
For $r\in\{5,6\}$, Lemma~\ref{2.1} gives
$\mathsf d(C_2^r\oplus C_6)
 \ge \mathsf d^*(C_2^r\oplus C_6)+1=r+6.$
Equality holds by
\cite[p.~2]{Sc11b} (where this result is  attributed to Ponomarenko).
\end{proof}

Now we are in a position to prove Theorems~\ref{thm:main1} and \ref{thm:main2}.

\begin{proof}[Proof of Theorem~\ref{thm:main1}]
Let $G=C_2^3\oplus C_{2n}$, where $n\ge3$ is odd.
Choose a basis $(e_1,e_2,e_3,e)$ of $G$ with $\ord(e_i)=2$
for $i \in [1,3]$ and $\ord(e)=2n$. Put $a=(n+1)/2$ and set
\begin{equation}\label{eq:family}
 T_n=e^{2n-3}(e_1+ae)(e_2+ae)(e_3+ae)
 (e_1+e_2+e_3+ae).
\end{equation}
Among the last four terms, the only nontrivial subsequence whose
sum lies in $\langle e\rangle$ consists of all four terms.
Their sum is $4ae=2e$, which cannot be completed to zero by
adding at most $2n-3$ copies of $e$. A nontrivial subsequence
consisting only of copies of $e$ also has nonzero sum.
Thus $T_n$ is zero-sum free, and $|T_n|=2n+1=\mathsf d^* (G)-1=\mathsf d (G)-1$ by Lemma~\ref{2.1}.

Put $A=\{e_1+e_2,e_1+e_3,e_2+e_3\}$. For each $x\in A$,
any subsequence of $T_n$ whose sum lies in $x+\langle e\rangle$
contains exactly two of the last four terms in \eqref{eq:family}.
Its sum therefore has the form $x+(2a+t)e$, where
$0\le t\le2n-3$. Since $2a=n+1$, the coefficient $2a+t$
is congruent to neither $n-1$ nor $n$ modulo $2n$.
Consequently,
\begin{equation}\label{equation:missing-sums}
 \{x+(n-1)e,\ x+ne\}
 \subseteq G^\bullet\setminus\Sigma(T_n)
 \qquad\text{for every }x\in A.
\end{equation}

Assume to the contrary, that
$G^\bullet\setminus\Sigma(T_n)\subseteq\alpha+H$
for some subgroup $H\le G$ and some $\alpha\notin H$.
Since differences of elements of $\alpha+H$ belong to $H$,
\eqref{equation:missing-sums} gives
\begin{align*}
 e   &= (x+ne)-(x+(n-1)e)\in H,\\
 x-y &= (x+ne)-(y+ne)\in H
\end{align*}
for all $x,y\in A$. Thus $A-A\subseteq H$.
Since $A-A=A\cup\{0\}$, we obtain $A\subseteq H$.
Together with $e\in H$, this implies that for every $x\in A$,
one has $x\in H$ and hence $x+(n-1)e\in H$.
Since also $x+(n-1)e\in\alpha+H$, it follows that $x+(n-1)e\in H\cap(\alpha+H)$, contradicting $\alpha\notin H$. Since $|T_n|=\dd(G)-1$, we obtain $\nu(G)>\dd(G)-1$.
The equalities follow from \eqref{basic-inequ-2} immediately.
\end{proof}

\medskip
\begin{proof}[Proof of Theorem~\ref{thm:main2}]
By Inequality \eqref{basic-inequ-2},   it suffices to show that $\nu(G)>\dd(G)-1$. We proceed in two steps.

1. Let $G=C_2^5\oplus C_6$.
Choose a basis $(e_1,\ldots,e_5,e)$ of $G$ with
$\ord(e_i)=2$ for $i\in[1,5]$ and $\ord(e)=6$. Set
\begin{equation}\label{eq:S}
 S=\prod_{i=1}^5(e_i+e)(e_i+4e).
\end{equation}
Then $|S|=10=\dd(G)-1$ by Lemma~\ref{lem:davenport}. A subsequence whose sum lies in $\langle e\rangle$
must contain both or neither of $e_i+e$ and $e_i+4e$
for each $i\in[1,5]$. Since each pair sums to $5e=-e$,
any nontrivial such subsequence has sum $-ke$ for some
$k\in[1,5]$, which is nonzero. Thus $S$ is zero-sum free.

Put $g_0=e_1+\ldots+e_5$. A subsequence whose sum lies in
$g_0+\langle e\rangle$ contains exactly one term from each pair.
For a sum in $g_0-e_i+\langle e\rangle$, it contains exactly
one term from each pair with index different from $i$,
and both or neither of the terms in the $i$th pair.
Consequently,
\begin{align*}
 \Sigma(S)\cap(g_0+\langle e\rangle)
   &=g_0+\{2e,5e\},\\
 \Sigma(S)\cap(g_0-e_i+\langle e\rangle)
   &=g_0-e_i+\{0,e,3e,4e\}
   \qquad(i\in[1,5]).
\end{align*}
It follows that
$$
 \{g_0,g_0+e\}
 \cup\{g_0-e_i+2e:i\in[1,5]\}
 \subseteq G^\bullet\setminus\Sigma(S).
$$

Assume to the contrary, that
$G^\bullet\setminus\Sigma(S)\subseteq\alpha+H$
for some subgroup $H\le G$ and some $\alpha\notin H$.
Taking differences of the missing sums above gives
\begin{align*}
 e &= (g_0+e)-g_0\in H,\\
 -e_i+2e &= (g_0-e_i+2e)-g_0\in H
 \qquad(i\in[1,5]).
\end{align*}
Hence $e,e_1,\ldots,e_5\in H$, so $H=G$,
contradicting $\alpha\notin H$. Thus $\nu(G)>\dd(G)-1$ in this case.

\smallskip
2. Let $G=C_2^6\oplus C_6$.
Choose a basis $(e_1,\ldots,e_6,e)$ of $G$ with
$\ord(e_i)=2$ for $i\in[1,6]$ and $\ord(e)=6$.
Regard the sequence $S$ in \eqref{eq:S} as a sequence over $G$.
For any $g\in\langle e_1,\ldots,e_5,e\rangle$, set
\begin{equation}\label{eq:lift}
	S_g=S(e_6+g).
\end{equation}
Every subsequence $(e_6+g)T$ of $S_g$ with $T\mid S$ has sum
$\sigma((e_6+g)T)\in e_6+\langle e_1,\ldots,e_5,e\rangle$, which does not contain zero.
Since $S$ is zero-sum free, it follows that $S_g$ is zero-sum free
and $|S_g|=11=\dd(G)-1$ by Lemma~\ref{lem:davenport}.

Moreover, any subsequence of $S_g$ whose sum lies in
$\langle e_1,\ldots,e_5,e\rangle$ is a subsequence of $S$.
Consequently,
\[
\langle e_1,\ldots,e_5,e\rangle^\bullet\setminus\Sigma(S)
\subseteq G^\bullet\setminus\Sigma(S_g).
\]
Let $g_0=e_1+\ldots+e_5$. Thus
$$\{g_0,g_0+e\}
 \cup\{g_0-e_i+2e:i\in[1,5]\}
 \subseteq G^\bullet\setminus\Sigma(S_g).$$

Assume to the contrary that
$G^\bullet\setminus\Sigma(S_g)\subseteq\alpha+H$
for some subgroup $H\le G$ and some $\alpha\notin H$.
Since differences of elements of $\alpha+H$ belong to $H$,
we obtain
\begin{align*}
	e &= (g_0+e)-g_0\in H,\\
	e_i-2e &= g_0-(g_0-e_i+2e)\in H
	\qquad\text{for every }i\in[1,5].
\end{align*}
Together with $e\in H$, this implies that for every $i\in[1,5]$,
one has $e_i=(e_i-2e)+2e\in H$, and hence
$g_0=e_1+\ldots+e_5\in H$.
Since $g_0\in G^\bullet\setminus\Sigma(S_g)\subseteq\alpha+H$,
it follows that $g_0\in H\cap(\alpha+H)$,
contradicting $\alpha\notin H$.
Since $S_g$ is zero-sum free and $|S_g|=\dd(G)-1$, we obtain $\nu(G)>\dd(G)-1$.
\end{proof}

\end{document}